\numberwithin{equation}{section}
\theoremstyle{plain}
\newtheorem{theorem}{Theorem}[section]
\newtheorem{lemma}[theorem]{Lemma}
\newtheorem{corollary}[theorem]{Corollary}
\newtheorem{conjecture}[theorem]{Conjecture}
\theoremstyle{definition}
\theoremstyle{remark}
\newtheorem{case[theorem]}{Case}
\title[\parbox{14cm}{\centering{Pinned distance problem over finite fields \hspace{1in}}} \quad]{Note on the pinned distance problem over finite fields  }
\author{Doowon Koh}
\address{Department of Mathematics\\
Chungbuk National University \\
Cheongju, Chungbuk 28644 Korea}
\email{koh131@chungbuk.ac.kr}
\thanks{Key words and phrases: Finite field, Pinned distance\\
This work was conducted during the research year of Chungbuk National University in 2022, and  was supported by Basic Science Research Programs through National Research Foundation of Korea (NRF) funded by the Ministry of Education (NRF-2018R1D1A1B07044469).} 
\subjclass[2010]{42B05, 11T23 }
\begin{document} 

\begin{abstract}
Let $\mathbb F_q$ be a finite field with odd $q$ elements. In this article, we prove that if $E\subseteq \mathbb F_q^d,  d\ge 2,$ and $|E|\ge  q$, then  there exists a set $Y \subseteq \mathbb F_q^d$ with $|Y|\sim q^d$ such that  for all $y\in Y$,  the number of distances between the point $y$ and the set $E$ is similar to the size of the finite field $\mathbb F_q.$  As a corollary,  we obtain that  for each set $E\subseteq \mathbb F_q^d$ with $|E|\ge q,$  there exists a set $Y\subseteq \mathbb F_q^d$ with $|Y|\sim q^d$ so that  any set  $E\cup \{y\}$ with $y\in Y$ determines a positive proportion of all possible distances. An averaging argument and the pigeonhole principle play a crucial role in proving our results.
\end{abstract}
\maketitle
\section{Introduction} 
Let $\mathbb F_q^d$ be the $d$-dimensional vector space over the finite field $\mathbb F_q$ with $q$ elements.
In 2005, Iosevich and Rudnev \cite{IR07} initially posed and studied an analogue of the Falconer distance problem over finite fields. 
They asked for the minimal exponent $\alpha>0$ such that  if $E\subseteq \mathbb F_q^d$ and $|E|\ge C q^\alpha$ for a sufficiently large constant $C>0$, then  
$$ |\Delta(E) | \ge c q$$
for some $0\le c\le 1,$ where $|\Delta(E)|$ denotes the cardinality of the distance set $\Delta(E)$, defined by
$$ \Delta(E)=\{||x-y||: x, y\in E\}.$$
Here we recall that   $||\alpha||:=\sum\limits_{j=1}^d \alpha_j^2$ for $\alpha=(\alpha_1, \ldots, \alpha_d) \in \mathbb F_q^d.$\\

By developing the discrete Fourier machinery, Iosevich and Rudnev \cite{IR07} proved that  $|\Delta(E)|\sim q$ whenever $|E|\ge C q^{(d+1)/2}.$ We recall that  $A \ll B$ means that  $A\le CB$ for some constant $C>0$, which is independent of $q$, and  we  use $A\sim B$   if $A\ll B $ and $B\ll A.$ The authors in \cite{HIKR10} showed that  the exponent $(d+1)/2$ is optimal for 
all odd dimensions $d\ge 3$ except for the cases when  $-1$ is not a square and $d=4k-1$ for $k\in \mathbb N.$
However, in any other cases including even dimensions $d\ge 2,$  it has been conjectured by Iosevich and Rudnev \cite{IR07} that  in order to have a positive proportion of all distances, the exponent $(d+1)/2$ can be improved to $d/2.$ 
\begin{conjecture} [Iosevich-Rudnev's Conjecture] Let $E\subseteq \mathbb F_q^d.$ 
Suppose that $d\ge 2$ is even or  $d, q \equiv 3 \mod{4}.$ Then if $|E|\ge C q^{d/2}$ for a sufficiently large constant $C>0$, we have
$|\Delta(E)|\sim q.$
\end{conjecture}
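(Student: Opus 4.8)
The plan is to run the classical second-moment (Fourier) argument of Iosevich and Rudnev \cite{IR07} and then confront the precise point at which it stalls at the exponent $(d+1)/2$. First I would introduce, for each $t\in\mathbb F_q$, the pair-counting function
$$\nu(t)=\bigl|\{(x,x')\in E\times E:\ \|x-x'\|=t\}\bigr|,$$
so that $\sum_{t}\nu(t)=|E|^2$. By Cauchy--Schwarz applied only to the distances actually attained,
$$|E|^4=\Bigl(\sum_{t\in\Delta(E)}\nu(t)\Bigr)^2\le |\Delta(E)|\sum_{t}\nu(t)^2.$$
Thus it suffices to prove the second-moment bound $\sum_t\nu(t)^2\ll |E|^4/q$ whenever $|E|\ge Cq^{d/2}$: this yields $|\Delta(E)|\gg q$, and the reverse inequality $|\Delta(E)|\le q$ is trivial, so together they give $|\Delta(E)|\sim q$.

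Next I would expand $\nu(t)$ by Fourier analysis on $\mathbb F_q^d$. Writing $S_t=\{x:\|x\|=t\}$ and fixing a nontrivial additive character $\chi$, the orthogonality relations produce a main term plus an error driven by the Fourier transform of the sphere:
$$\nu(t)=\frac{|S_t|}{q^d}\,|E|^2+q^{2d}\sum_{m\neq 0}\widehat{S_t}(m)\,|\widehat{E}(m)|^2=:\frac{|S_t|}{q^d}\,|E|^2+R(t).$$
The standard Gauss-sum estimate $|\widehat{S_t}(m)|\ll q^{-(d+1)/2}$ for $m\neq 0$, together with Parseval's identity $\sum_m|\widehat{E}(m)|^2=q^{-d}|E|$, gives the pointwise bound $|R(t)|\ll q^{(d-1)/2}|E|$. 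Squaring, summing over the at most $q$ values of $t$, and keeping track of the main term yields $\sum_t\nu(t)^2\ll |E|^4/q+q^{d}|E|^2$. For the error to be absorbed one needs $q^{d}|E|^2\ll |E|^4/q$, i.e.\ $|E|\gg q^{(d+1)/2}$; this is exactly the Iosevich--Rudnev exponent.

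The hard part --- and the reason the conjecture remains open --- is precisely that the Gauss-sum bound is sharp: $|\widehat{S_t}(m)|\sim q^{-(d+1)/2}$ for generic $m$, so any argument using only the second moment together with the \emph{pointwise} sphere estimate is stuck at $(d+1)/2$. Reaching $d/2$ is equivalent to replacing the error $q^{d}|E|^2$ by $q^{d-1}|E|^2$, i.e.\ to saving one extra power of $q$ in $\sum_t R(t)^2$. The only available source of that saving is the summation in $t$: since $\sum_t\widehat{S_t}(m)=0$ for $m\neq 0$, the naive diagonal cancels, and one must instead bound the bilinear kernel $\sum_t\widehat{S_t}(m)\overline{\widehat{S_t}(m')}$ and exhibit genuine cancellation in the surviving terms. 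It is here that the arithmetic hypotheses enter: the cases $d$ even and $d\equiv q\equiv 3\bmod 4$ are exactly those in which the extremal configurations from \cite{HIKR10} forcing the exponent $(d+1)/2$ are absent, because these congruence conditions control the number of isotropic vectors and thereby forbid the large totally isotropic subspaces (and light-cone concentrations) responsible for the sharp examples in the excluded dimensions.

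Accordingly, I would try to show that any set $E$ with $|E|\sim q^{d/2}$ for which $|\Delta(E)|=o(q)$ must be highly structured --- subspace-, subfield-, or light-cone-like --- and then invoke the congruence hypotheses to rule such sets out once the constant $C$ is large enough. The natural tools are incidence estimates between points and spheres, restriction and extension inequalities for the finite-field sphere, and higher-moment ($L^4$ and beyond) refinements of the energy count that detect the forbidden structure. Making any one of these strategies break the $(d+1)/2$ barrier under the stated hypotheses is the essential difficulty, and at present it is unresolved; the second-moment reduction above isolates it cleanly as the problem of saving a single factor of $q$ in the $t$-averaged error.
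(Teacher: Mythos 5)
Your proposal is not a proof, and it cannot be compared against a proof in the paper, because the statement you were given is a \emph{conjecture}: the paper states it as Iosevich--Rudnev's Conjecture and explicitly says it ``is still open and even the threshold $(d+1)/2$ has not been improved except for two dimensions.'' Your write-up is candid about this --- the last two paragraphs are a research program (``I would try to show\dots,'' ``at present it is unresolved''), not an argument. So the concrete gap is the entire content of the conjecture: your second-moment reduction, which is a correct rendering of the standard Iosevich--Rudnev/CEHIK machinery, establishes $|\Delta(E)|\gg q$ only under $|E|\gg q^{(d+1)/2}$, because the pointwise sphere bound $|\widehat{S_t}(m)|\ll q^{-(d+1)/2}$ is sharp and the error term $q^{d}|E|^2$ dominates $|E|^4/q$ exactly when $|E|\ll q^{(d+1)/2}$. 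The step that would close the gap --- exhibiting cancellation in the bilinear kernel $\sum_t\widehat{S_t}(m)\overline{\widehat{S_t}(m')}$, or ruling out structured near-extremizers using the congruence hypotheses on $d$ and $q$ --- is asserted as a goal but never carried out, and no known technique carries it out; this is precisely why the problem is open.

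Two smaller remarks on the parts you did write. First, your barrier analysis is accurate: the identity $\sum_t\widehat{S_t}(m)=0$ for $m\neq 0$ and the sharpness of the Gauss-sum bound do isolate the needed saving of one power of $q$ in the $t$-averaged error, and the role of the hypotheses ($d$ even, or $d,q\equiv 3 \bmod 4$) in excluding the HIKR sharpness examples is the right heuristic. Second, be aware that the paper you were handed does not attack this conjecture at all; its actual theorem is much weaker and entirely elementary (an averaging/pigeonhole argument showing that for $|E|\ge q$ \emph{almost every} pin $y\in\mathbb F_q^d$ --- not $y\in E$ --- gives $|\Delta_y(E)|\gg q$), which is consistent with, but far from, the conjectured threshold $q^{d/2}$ with pins in $E$.
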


Iosevich-Rudnev's Conjecture is still open and  even  the threshold $(d+1)/2$ has not been improved except for two dimensions.
In the case of $d=2$ over general finite fields,  the authors in \cite{CEHIK09} obtained the $4/3$ exponent, which is  the first result to break down the exponent $(d+1)/2.$ This result was obtained by applying  the restriction estimates for the circles on the plane.
More precisely,  they proved the following result with an explicit constant.

\begin{theorem} [\cite{CEHIK09}]\label{wolffin2d}  Let $E$ be a subset of $\subset {\mathbb F}_q^2$ with $|E|\ge q^{4/3}.$ Then following statements hold:
\begin{enumerate}
\item If $q\equiv 3 \mod{4},$ then  $|\Delta(E)|\ge \frac{q}{1+\sqrt{3}}.$

\item If $q \equiv 1 \mod{4},$ then $|\Delta(E)|\ge C_{q} q,$ where the constant $C_q$ is defined by
$$C_q:= \frac{\left(1-2q^{-1}\right)^2}{1+\sqrt{3}-\sqrt{3}q^{-2/3}}.$$
\end{enumerate}
\end{theorem}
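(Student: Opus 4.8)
The plan is to convert the desired lower bound on $|\Delta(E)|$ into an upper bound on a fourth moment of $E$, which is then controlled by the $L^4$ restriction theory of the circle. For $t\in\mathbb F_q$ put $S_t=\{x\in\mathbb F_q^2:\|x\|=t\}$ and
\[
\nu(t)=|\{(x,y)\in E\times E:\|x-y\|=t\}|,
\]
so that $\sum_{t}\nu(t)=|E|^2$ and $|\Delta(E)|=|\{t:\nu(t)>0\}|$. First I would discard the degenerate distance $t=0$ and apply Cauchy--Schwarz in the form
\[
\Big(\sum_{t\neq 0}\nu(t)\Big)^2\le \big(|\Delta(E)|-1\big)\sum_{t\neq 0}\nu(t)^2 .
\]
Everything then reduces to showing that $\sum_{t\neq0}\nu(t)^2$ stays close to the heuristic value $|E|^4/q$. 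Excising $t=0$ is exactly what separates the two cases: when $q\equiv1\pmod 4$, $-1$ is a square, $\|\cdot\|$ is isotropic, the null cone $S_0$ is a union of two lines, and $\nu(0)\sim 2|E|^2/q$, so that $\sum_{t\neq0}\nu(t)=|E|^2-\nu(0)\sim|E|^2(1-2q^{-1})$; squaring produces precisely the numerator $(1-2q^{-1})^2$ of $C_q$. When $q\equiv3\pmod 4$ the form is anisotropic, $\nu(0)=|E|$ is negligible, and no such correction is needed.

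Next I would expand $\sum_t\nu(t)^2$ by Fourier analysis. Writing $\widehat E(m)=\sum_{x\in E}\chi(-m\cdot x)$ for a fixed nontrivial additive character $\chi$,
\[
\sum_{t}\nu(t)^2=\frac{1}{q^{4}}\sum_{m,m'\in\mathbb F_q^2}|\widehat E(m)|^2\,|\widehat E(m')|^2\,\Theta(m,m'),\qquad \Theta(m,m')=\sum_{a,b:\,\|a\|=\|b\|}\chi(b\cdot m'-a\cdot m).
\]
The diagonal term $m=m'=0$ equals $q^{-4}|E|^4\sum_t|S_t|^2\sim|E|^4/q$, where $|S_t|=q-1$ or $q+1$ according as $q\equiv1$ or $3\pmod 4$; this is the ``$1$'' in the constant $1+\sqrt3$. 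The terms in which exactly one of $m,m'$ vanishes are lower order, so the entire difficulty is the genuinely off-diagonal sum over $m,m'\neq0$.

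Here I would invoke the extension estimate for the circle. The off-diagonal sum is precisely the object bounded by the $L^4$ norm of the extension operator $g\mapsto\sum_{x\in S_1}g(x)\chi(m\cdot x)$, whose fourth power counts additive quadruples $x+y=z+w$ with $x,y,z,w\in S_1$. The combinatorial heart is that for $v\neq0$ the multiplicity $N(v)=|\{(a,b)\in S_1\times S_1:a-b=v\}|=|S_1\cap(S_1+v)|$ is at most $2$, since a circle and a translate of it meet in at most two points; hence the additive energy obeys
\[
\sum_{v}N(v)^2\le |S_1|^2+2|S_1|\big(|S_1|-1\big)\sim 3q^2 .
\]
Taking the square root of this energy is what produces the constant $\sqrt3$. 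Inserting the resulting $L^4$ bound together with Plancherel's identity $\sum_m|\widehat E(m)|^2=q^2|E|$ shows that the off-diagonal contribution is at most a $\sqrt3$-multiple of the diagonal term as soon as $|E|\ge q^{4/3}$, the exponent $4/3$ being exactly the point at which error and main term balance; a careful accounting of the lower-order terms at this threshold is what yields the correction $-\sqrt3\,q^{-2/3}$ in the denominator of $C_q$.

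Finally I would assemble the estimates: combining the diagonal main term, the off-diagonal bound, and the Cauchy--Schwarz inequality gives $\sum_{t\neq0}\nu(t)^2\le\big(1+\sqrt3-\sqrt3\,q^{-2/3}\big)|E|^4/q$, whence $|\Delta(E)|\ge q/(1+\sqrt3)$ when $q\equiv3\pmod 4$ and $|\Delta(E)|\ge C_qq$ when $q\equiv1\pmod 4$. I expect the main obstacle to be the sharp circle restriction estimate with an \emph{explicit} constant: one must evaluate the Gauss/Kloosterman-type sums defining $\widehat{S_t}(m)$ and pin down the leading constant $3$ in the additive energy, all while keeping the anisotropic case $q\equiv3\pmod 4$ apart from the isotropic case $q\equiv1\pmod 4$, where the null cone and the smaller circle $|S_1|=q-1$ are exactly what force the weaker constant $C_q$.
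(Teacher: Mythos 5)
First, a point of order: the paper you were given does not prove this theorem at all; it is quoted from [CEHIK09], and the paper only remarks that it ``was obtained by applying the restriction estimates for the circles on the plane.'' So your proposal can only be judged against the strategy of that source, and at the level of strategy you have reconstructed it correctly: Cauchy--Schwarz against a second moment of the distance-counting function, a Fourier expansion of that moment, and an explicit $L^4$ extension estimate for circles whose constant comes from the additive energy bound $|S_t|^2+2|S_t|\bigl(|S_t|-1\bigr)\sim 3q^2$, which rests on the fact that a circle meets any nontrivial translate of itself in at most two points. That is indeed the source of the constant $1+\sqrt3$, and your outline for $q\equiv 3\pmod 4$ (where $S_0=\{0\}$, so $\nu(0)=|E|$) is sound in structure, though the central quantitative step --- that the off-diagonal sum is at most $\sqrt3$ times the diagonal once $|E|\ge q^{4/3}$ --- is asserted rather than derived, and that is where essentially all of the work lies.

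The genuine gap is in the case $q\equiv 1\pmod 4$, precisely the case that produces $C_q$. You claim $\nu(0)\sim 2|E|^2/q$ because the null cone $S_0$ has about $2q$ points. This is false in general: writing $\ell_1,\ell_2$ for the two isotropic directions, one has $\nu(0)=\sum_{\ell\parallel\ell_1}|E\cap\ell|^2+\sum_{\ell\parallel\ell_2}|E\cap\ell|^2-|E|$, and nothing prevents $E$ from concentrating on isotropic lines. Concretely, if $E$ is the union of $q^{1/3}$ parallel isotropic lines, then $|E|=q^{4/3}$ satisfies the hypothesis, yet $\nu(0)\approx q|E|=q^{7/3}$, which exceeds $2|E|^2/q\approx 2q^{5/3}$ by a factor of $q^{2/3}$; the best general bound is $\nu(0)\le (2q-1)|E|$, which at the threshold only yields $\sum_{t\ne 0}\nu(t)\ge |E|^2\bigl(1-2q^{-1/3}\bigr)$, so your derivation of the numerator $\bigl(1-2q^{-1}\bigr)^2$ collapses. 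The same example breaks the second-moment step: $\nu(0)^2\approx q^2|E|^2=q^{14/3}$ is larger than the heuristic value $|E|^4/q=q^{13/3}$, so the fourth moment summed over \emph{all} $t$ --- which is what your Fourier identity computes, since the orthogonality in $t$ needs the complete sum --- genuinely exceeds $(1+\sqrt3)|E|^4/q$. One must excise $t=0$ inside the Fourier argument and then control the resulting null-cone terms $\widehat{S_0}(m)$, which have size $q^{-1}$ on roughly $2q$ frequencies (the self-dual null cone), rather than size $O(q^{-3/2})$ like the Kloosterman-type terms $\widehat{S_t}(m)$, $t\ne 0$. Handling this concentration, and only then extracting the explicit constants, is the real content of the $q\equiv 1\pmod 4$ case, and it is missing from your proposal: as written, it is a plausible road map for $q\equiv 3\pmod 4$, but for $q\equiv 1\pmod 4$ it routes through a step that is actually false.
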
 

Notice that  $C_q >0$ for all $q\ge 3,$ and  $C_q$ converges to  $\frac{1}{1+\sqrt{3}}$ as $q\to \infty.$  Since a convergent sequence is bounded,  we therefore choose  a constant $c>0,$ independent of $q,$ such that   $C_q \ge c >0.$  From this observation,  the following corollary is a direct consequence of Theorem \ref{wolffin2d}.
\begin{corollary}[\cite{CEHIK09}] Suppose that $E \subseteq \mathbb F_q^2$ with $|E|\ge q^{4/3}.$ Then we have
$$ |\Delta(E)|\sim q.$$
\end{corollary}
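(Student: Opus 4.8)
The plan is to verify the two relations packaged in the notation $|\Delta(E)| \sim q$ separately, namely the upper bound $|\Delta(E)| \ll q$ and the matching lower bound $q \ll |\Delta(E)|$, and to deduce the latter directly from Theorem \ref{wolffin2d} by treating the two residue classes of $q$ modulo $4$ in turn. Since the corollary is asserted to be a direct consequence of that theorem, the whole argument should reduce to bookkeeping with the implied constants.

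First I would dispose of the upper bound, which is immediate and needs no hypothesis on $|E|$: by definition the distance set $\Delta(E)$ is a subset of the ground field $\mathbb F_q$, since each $||x-y||$ lies in $\mathbb F_q$. Hence $|\Delta(E)| \le q$, which is exactly $|\Delta(E)| \ll q$ with implied constant $1$.

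For the lower bound I would invoke Theorem \ref{wolffin2d} under the standing hypothesis $|E| \ge q^{4/3}$. When $q \equiv 3 \mod{4}$, part (1) gives $|\Delta(E)| \ge \frac{1}{1+\sqrt{3}}\, q$, so $q \ll |\Delta(E)|$ with the explicit constant $1+\sqrt{3}$. When $q \equiv 1 \mod{4}$, part (2) gives $|\Delta(E)| \ge C_q\, q$; here I cannot simply use $C_q$ as the implied constant, because the $\sim$ notation requires a constant that is \emph{independent} of $q$. Instead I would use the facts recorded just before the statement: $C_q > 0$ for every $q \ge 3$ and $C_q \to \frac{1}{1+\sqrt{3}}$ as $q \to \infty$, so the sequence $(C_q)$ is bounded below by some absolute constant $c > 0$. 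This yields $|\Delta(E)| \ge c\, q$, hence $q \ll |\Delta(E)|$ uniformly in $q$.

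Combining the two residue classes produces a single lower bound $q \ll |\Delta(E)|$ valid for all admissible $q$, and together with the trivial upper bound this gives $|\Delta(E)| \sim q$. The only point requiring any care — and thus the nearest thing to a main obstacle, though it is a mild one — is extracting a $q$-independent constant in the case $q \equiv 1 \mod{4}$; this is precisely what the convergence-and-boundedness remark preceding the corollary supplies, so no genuine difficulty remains.
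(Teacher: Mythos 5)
Your proposal is correct and follows essentially the same route as the paper: the paper's own justification is precisely the remark preceding the corollary (positivity of $C_q$ for all $q\ge 3$ together with $C_q \to \frac{1}{1+\sqrt{3}}$ yields a uniform lower bound $C_q \ge c > 0$), combined with the two cases of Theorem \ref{wolffin2d} and the trivial containment $\Delta(E)\subseteq \mathbb F_q$ for the upper bound. Your only refinement is making explicit that one needs the sequence $(C_q)$ bounded \emph{below away from zero} (not merely bounded, as the paper loosely says), which is the correct reading of the paper's intent.
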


Using  a group action approach,  Bennett, Hart,  Iosevich,  Pakianathan, and  Rudnev \cite{BHIPR} provided  an alternative proof for the exponent $4/3$ in the above corollary.\\

As a strong version of the Falconer distance problem, one has  studied the pinned distance problem over finite fields.
Given $E \subseteq \mathbb F_q^d, d\ge 2,$ and $y\in \mathbb F_q,$ the pinned distance set with a pin $y$, denoted by $\Delta_y(E),$ is defined by
$$ \Delta_y(E)=\{||x-y||: x\in E\}.$$
The  Chapman,  Erdo\~{g}an, Hart,  Iosevich, and  Koh \cite{CEHIK09} showed that  the exponent $(d+1)/2$ due to Iosevich and Rudnev  holds true for the pinned distance sets.
More precisely they proved the following.
\begin{theorem}
[\cite{CEHIK09}] \label{CEHIK} Let $E\subseteq \mathbb F_q^d, d\ge 2.$  If $|E|\ge q^{\frac{d+1}{2}},$ then there exists  a subset $E'$ of $E$ with $|E'|\sim |E|$ so that  for every $y\in E'$,  we have
$$ |\Delta_y(E)|\sim q.$$
\end{theorem}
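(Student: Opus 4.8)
The plan is to reduce the statement to a single second-moment estimate and then to extract the good pins by an averaging/pigeonholing argument. For a pin $y\in E$ and a value $t\in\mathbb F_q$, write $\nu_y(t)=|\{x\in E: \|x-y\|=t\}|$, so that $\sum_{t}\nu_y(t)=|E|$ and the number of nonzero terms is exactly $|\Delta_y(E)|$. By the Cauchy--Schwarz inequality,
$$|\Delta_y(E)|\ge \frac{|E|^2}{\sum_{t}\nu_y(t)^2}.$$
Since $|\Delta_y(E)|\le q$ trivially, it suffices to show that $\sum_t\nu_y(t)^2\ll |E|^2/q$ for a proportion $\sim 1$ of the pins $y\in E$. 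I would obtain this from a bound on the aggregate quantity
$$\Lambda:=\sum_{y\in E}\sum_{t}\nu_y(t)^2=|\{(x,x',y)\in E^3:\ \|x-y\|=\|x'-y\|\}|,$$
together with the pigeonhole principle: if $\Lambda\ll |E|^3/q$, then the set $E'$ of pins $y$ with $\sum_t\nu_y(t)^2\le 2\Lambda/|E|$ satisfies $|E'|\ge |E|/2\sim |E|$ by Markov's inequality, and every such $y$ satisfies $|\Delta_y(E)|\sim q$.

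To estimate $\Lambda$, I fix a nontrivial additive character $\chi$ of $\mathbb F_q$ and use the orthogonality relation $\sum_{s\in\mathbb F_q}\chi(sa)=q\cdot 1_{a=0}$ to detect the condition $\|x-y\|=\|x'-y\|$. This gives
$$\Lambda=\frac{1}{q}|E|^3+\frac{1}{q}\sum_{s\ne 0}\ \sum_{y\in E}\Bigl|\sum_{x\in E}\chi(s\|x-y\|)\Bigr|^2,$$
where the $s=0$ term is the expected main term. The crucial structural observation is that, writing $1_E$ for the indicator of $E$ and $\psi_s(z)=\chi(s\|z\|)$, the inner sum equals the convolution $(1_E*\psi_s)(y)$, so each $s$-summand is $\sum_{y\in E}|(1_E*\psi_s)(y)|^2$.

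The key arithmetic input is that $\psi_s$ has a flat Fourier transform: completing the square in the Gauss sum coordinatewise yields $\widehat{\psi_s}(m)=\chi(-\|m\|/(4s))\,G_s^d$, where $G_s=\sum_{u\in\mathbb F_q}\chi(su^2)$ satisfies $|G_s|=\sqrt q$ (here $q$ is odd, so $4s$ is invertible). Hence $|\widehat{\psi_s}(m)|=q^{d/2}$ for every $m$ and every $s\ne 0$. Dropping the restriction $y\in E$ and applying Plancherel to the convolution then gives
$$\sum_{y\in E}|(1_E*\psi_s)(y)|^2\le \sum_{y\in\mathbb F_q^d}|(1_E*\psi_s)(y)|^2=\frac{1}{q^d}\sum_{m}|\widehat{1_E}(m)|^2\,|\widehat{\psi_s}(m)|^2=\sum_{m}|\widehat{1_E}(m)|^2=q^d|E|.$$
Summing over the $q-1$ nonzero values of $s$ yields $\Lambda\le |E|^3/q+q^d|E|$, and the hypothesis $|E|\ge q^{(d+1)/2}$ is exactly the condition $|E|^2\ge q^{d+1}$ under which the first term dominates, so $\Lambda\ll |E|^3/q$ as required. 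Combining this with the pigeonhole step above completes the argument.

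The \emph{main obstacle} is the control of the off-diagonal ($s\ne 0$) contribution, and in particular the flatness $|\widehat{\psi_s}(m)|=q^{d/2}$, which is what supplies the decisive factor $q^{-1}$ after Plancherel; this is the only place where the arithmetic of $\mathbb F_q$ genuinely enters. I expect that bounding $\sum_{y\in E}|(1_E*\psi_s)(y)|^2$ by the full sum over $y\in\mathbb F_q^d$ is the lossy step responsible for the exponent $(d+1)/2$ rather than the conjectured $d/2$: retaining the restriction $y\in E$ would require controlling genuine interactions among the frequencies $\widehat{1_E}(m)$, which is precisely the harder problem left open by the Iosevich--Rudnev Conjecture. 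For the present threshold, however, the crude Plancherel bound suffices and the remaining steps are routine.
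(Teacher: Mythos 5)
Your proof is correct, and it is worth noting that the paper itself never proves this statement: Theorem \ref{CEHIK} is quoted from \cite{CEHIK09}, and the paper's own machinery (Lemma \ref{AVPin} and Lemma \ref{CorPinForm}) proves something genuinely weaker. Both arguments share the same skeleton --- the Cauchy--Schwarz reduction $|\Delta_y(E)|\ge |E|^2/\sum_t\nu_y^2(t)$ (this is exactly \eqref{PinForm}), a second-moment bound, and pigeonholing/Markov --- but they diverge at the second moment. The paper averages $\sum_t\nu_y^2(t)$ over \emph{all} $y\in\mathbb F_q^d$, where the off-diagonal count is exactly the number of points on a bisecting hyperplane, $q^{d-1}$; this is completely elementary, needs no hypothesis on $|E|$, and no Fourier analysis, but the good pins it produces form a dense subset of $\mathbb F_q^d$ and need not lie in $E$. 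You instead restrict the second moment to pins $y\in E$, which is precisely what the CEHIK statement demands and precisely where the elementary bisector count no longer closes: the quantity $\Lambda=\sum_{y\in E}\sum_t\nu_y^2(t)$ genuinely depends on the structure of $E$, and your treatment of it --- orthogonality in $s$, the identification of the inner sum as $(1_E*\psi_s)(y)$, the Gauss-sum flatness $|\widehat{\psi_s}(m)|=q^{d/2}$, and the positivity trick of enlarging $\sum_{y\in E}$ to $\sum_{y\in\mathbb F_q^d}$ before Plancherel --- is sound, with the hypothesis $|E|\ge q^{(d+1)/2}$ entering exactly where you say it does, to make $q^d|E|\le |E|^3/q$. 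Your closing diagnosis is also accurate: the enlargement step is the loss that pins the threshold at $(d+1)/2$, and it is the same loss that the paper's all-of-$\mathbb F_q^d$ averaging sidesteps by giving up on having the pins inside $E$. In short, the paper's argument buys universality (any $E$, even $|E|\sim q$, as in Theorem \ref{main}) at the price of external pins, while yours buys internal pins at the price of the size hypothesis; the two are complementary, and yours is a legitimate self-contained proof of the cited theorem.
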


As seen in the conjecture of the Falconer distance set problem,    the exponent $(d+1)/2$ cannot be improved except for the cases when $d, q\equiv 3 \mod{4}$ or  $d\ge 2$ is even. 
However, in  those cases  it have been believed that   $d/2$ can be the best possible exponent for the pinned distance sets. 
As partial evidence for this prediction,    the $4/3$ exponent result was extended  to the pinned distance sets in $\mathbb F_q^2$  by Hanson, Lund, and  Roche-Newton \cite{HLR16}, who successfully performed  the bisector energy estimate.

\begin{theorem}[\cite{HLR16}] Let $E\subseteq \mathbb F_q^2.$  If $|E|\ge q^{4/3}$, then  the conclusion of Theorem \ref{CEHIK} holds.
\end{theorem}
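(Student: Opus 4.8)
The plan is to reduce the pinned statement to a single \emph{isosceles-triangle} count and then to invoke a bisector energy estimate. For $y\in E$ and $t\in\mathbb F_q$ set $\nu_y(t)=|\{x\in E: \|x-y\|=t\}|$, so that $\sum_t \nu_y(t)=|E|$. By the Cauchy--Schwarz inequality,
\[ |\Delta_y(E)|\ \ge\ \frac{\left(\sum_t \nu_y(t)\right)^2}{\sum_t \nu_y(t)^2}\ =\ \frac{|E|^2}{S(y)}, \qquad \text{where } S(y):=\sum_t \nu_y(t)^2. \]
Hence it suffices to produce a set $E'\subseteq E$ with $|E'|\sim|E|$ on which $S(y)\ll |E|^2/q$. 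Since $S(y)$ counts the pairs $(x,x')\in E^2$ equidistant from $y$, summing over $y$ gives
\[ T:=\sum_{y\in E}S(y)=\big|\{(y,x,x')\in E^3:\ \|y-x\|=\|y-x'\|\}\big|, \]
the number of (possibly degenerate) isosceles triangles with all vertices in $E$. By Markov's inequality, if $T\ll |E|^3/q$, then for all but at most $|E|/2$ values of $y\in E$ one has $S(y)\le C|E|^2/q$; taking $E'$ to be the remaining points yields $|E'|\sim|E|$ and $|\Delta_y(E)|\ge |E|^2/S(y)\gg q$ for each $y\in E'$. Thus the whole theorem follows from the single bound $T\ll |E|^3/q$.

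To bound $T$, I would organize the count by the perpendicular bisector of the pair $(x,x')$. Writing $i(\ell):=|E\cap\ell|$ for the number of points of $E$ on a line $\ell$ and $b(\ell):=|\{(x,x')\in E^2:\ x\ne x',\ B(x,x')=\ell\}|$ for the number of non-degenerate pairs whose bisector is $\ell$, the condition $\|y-x\|=\|y-x'\|$ says precisely that $y$ lies on the bisector line $B(x,x')$; separating the diagonal $x=x'$ (which contributes only $|E|^2\le |E|^3/q$) gives
\[ T\ \le\ |E|^2+\sum_{\ell} i(\ell)\,b(\ell)\ \le\ |E|^2+\Big(\sum_\ell i(\ell)^2\Big)^{1/2}\Big(\sum_\ell b(\ell)^2\Big)^{1/2}, \]
the sums running over all lines $\ell$ and the last step being Cauchy--Schwarz. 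The first factor is elementary: $\sum_\ell i(\ell)^2$ counts collinear pairs of points of $E$, and since two distinct points lie on a unique line it equals $|E|^2+|E|q\ll |E|^2$ once $|E|\ge q$. The second factor is the bisector energy
\[ \mathcal B(E):=\sum_\ell b(\ell)^2=\big|\{(x,x',z,z')\in E^4:\ B(x,x')=B(z,z')\}\big|. \]

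The crux of the argument, and the step I expect to be the main obstacle, is the bisector energy estimate $\mathcal B(E)\ll |E|^4/q^2$ (up to lower-order terms). The natural route is to parametrize pairs sharing a bisector $\ell$ by the reflection $\sigma_\ell$ across $\ell$: a pair $(x,x')$ with $x\ne x'$ has bisector $\ell$ exactly when $x'=\sigma_\ell(x)$, so $b(\ell)=|\{x\in E:\ \sigma_\ell(x)\in E\}|$ and $\mathcal B(E)$ becomes a count of coincidences among the reflections determined by pairs of points of $E$. These reflections are elements of the group of rigid motions of the plane, so $\mathcal B(E)$ can be recast as an incidence/energy problem for that group; controlling it requires genuine tools from incidence geometry, and this is exactly where the analysis of \cite{HLR16} does its real work. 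Granting $\mathcal B(E)\ll |E|^4/q^2$, the Cauchy--Schwarz chain yields $T\ll |E|\,(|E|^4/q^2)^{1/2}=|E|^3/q$, and at the threshold $|E|=q^{4/3}$ every lower-order term is dominated by $|E|^3/q=q^3$. The appeal of this arrangement is that the entire difficulty is concentrated in one clean estimate for $\mathcal B(E)$, while the passage back to the pinned statement uses only Cauchy--Schwarz together with an averaging (Markov) step.
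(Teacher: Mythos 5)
You should know at the outset that the paper you are commenting on does not prove this statement at all: it is quoted as a black-box theorem of Hanson, Lund, and Roche-Newton \cite{HLR16}, so your proposal must stand or fall as a self-contained argument. The skeleton you set up is indeed the right one, and it is essentially the strategy of \cite{HLR16}: Cauchy--Schwarz converts the pinned count into the isosceles-triangle count $T$, writing the equidistance condition as incidence of $y$ with the perpendicular bisector turns $T$ into the sum $\sum_\ell i(\ell)b(\ell)$, your computation $\sum_\ell i(\ell)^2=|E|^2+q|E|\ll |E|^2$ is correct (each point lies on $q+1$ lines, and $|E|\ge q^{4/3}\ge q$), and the final Markov/averaging step correctly recovers the large-subset form of the conclusion, i.e.\ the conclusion of Theorem \ref{CEHIK}.

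The genuine gap is that the single estimate carrying all of the content, $\mathcal B(E)\ll |E|^4/q^2$, is not proved but merely granted --- and, worse, it is false as stated. When $q\equiv 1 \pmod 4$ the plane $\mathbb F_q^2$ contains isotropic lines, i.e.\ lines whose direction vector $v$ satisfies $||v||=0$. If $\ell_0=\{p+tv: t\in \mathbb F_q\}$ is such a line and $x=p+sv$, $x'=p+s'v$ are distinct points of $\ell_0$, then the equidistance condition $||y-x||=||y-x'||$ reduces (using $||v||=0$) to $(y-p)\cdot v=0$, which is precisely the line $\ell_0$ itself; hence \emph{every} ordered pair of distinct points of $\ell_0$ has bisector $\ell_0$. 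A set $E$ with $|E|=q^{4/3}$ containing all $q$ points of $\ell_0$ therefore has $\mathcal B(E)\ge (q^2-q)^2\sim q^4$, while your claimed bound would give $|E|^4/q^2=q^{10/3}$. This is also exactly where your reflection parametrization breaks: there is no well-defined orthogonal reflection across an isotropic line, since its direction vector is orthogonal to itself. The actual bisector energy theorem of \cite{HLR16} accordingly carries additional terms controlled by the maximum number of points of $E$ on a single isotropic line, the pinned-distance argument there needs a separate treatment of sets clustering on such lines, and establishing the main term requires genuine incidence-geometric work. So your write-up is an accurate reduction of the theorem to its hard core, but it neither proves that core nor states it in a correct form.
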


When $q$ is prime,  the exponent $4/3$ have been improved to $5/4$ by Murphy,  Petridis,  Pham,  Rudnev, and  Stevenson \cite{MPPRS}.
\begin{theorem} [\cite{MPPRS}] Let $q$ be prime. Then if $E \subseteq \mathbb F_q^2$ with $|E|\ge q^{\frac{5}{4}}$,   we have
$$\max\limits_{y\in E} |\Delta_y(E)|\sim q.$$
\end{theorem}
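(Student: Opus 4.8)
The plan is to deduce the pinned bound from an estimate on the ``pinned distance energy.'' For $y\in E$ write $E_y=\sum_{t\in\mathbb F_q}|\{x\in E:\|x-y\|=t\}|^2$. Since at most $q$ distances can occur, Cauchy--Schwarz gives $|\Delta_y(E)|\ge |E|^2/E_y$, and because $\min_{y\in E}E_y\le |E|^{-1}\sum_{y\in E}E_y$, it suffices to prove
\begin{equation*}
\sum_{y\in E}E_y\ll \frac{|E|^3}{q}\qquad\text{whenever } |E|\ge q^{5/4}.
\end{equation*}
The left-hand side counts triples $(x,x',y)\in E^3$ with $\|x-y\|=\|x'-y\|$, and the trivial bound $E_y\ge |E|^2/q$ shows this target is sharp. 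The diagonal $x=x'$ contributes $|E|^2$, which is $\ll |E|^3/q$ since $|E|\ge q$, so everything reduces to the off-diagonal count.

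Next I would read the off-diagonal part as an incidence count. For $x\ne x'$ the condition $\|x-y\|=\|x'-y\|$ says exactly that $y$ lies on the perpendicular bisector $B(x,x')$, which is the genuine line $2(x'-x)\cdot y=\|x'\|-\|x\|$. Hence the off-diagonal sum equals $\sum_{x\ne x'}|E\cap B(x,x')|=\sum_{\ell}m(\ell)\,|E\cap\ell|$, where $m(\ell)$ is the number of ordered pairs with bisector $\ell$. Cauchy--Schwarz bounds this by $\mathcal B(E)^{1/2}\big(\sum_\ell|E\cap\ell|^2\big)^{1/2}$, where $\mathcal B(E)=\sum_\ell m(\ell)^2$ is the \emph{bisector energy}. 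Since $\sum_\ell|E\cap\ell|^2\ll |E|^2+q|E|\ll|E|^2$ for $|E|\ge q$, the whole theorem comes down to the sharp bisector energy bound $\mathcal B(E)\ll |E|^4/q^2$, which is precisely the convexity lower bound coming from the $\sim q^2$ available bisector lines.

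The crux is therefore this bisector energy estimate, and this is where primality must enter. I would rewrite $B(a,b)=B(c,d)$ as the two conditions $(b-a)\parallel(d-c)$ and $(b-a)\cdot(a+b-c-d)=0$, organise the quadruples by the common direction of $b-a$ and $d-c$, and encode the resulting coincidences as incidences between points and lines in $\mathbb F_p^2$ (equivalently as a point--plane incidence problem in $\mathbb F_p^3$). Applying the Stevens--de Zeeuw point--line bound over prime fields, or Rudnev's point--plane theorem, should then yield $\mathcal B(E)\ll |E|^4/q^2+(\text{error})$, with an error that is negligible exactly once $|E|\ge q^{5/4}$. It is precisely the extra strength of these incidence theorems over $\mathbb F_p$, compared with the $q$-general estimates that give the $4/3$ exponent, that lowers the threshold to $5/4$ and forces the hypothesis that $q$ be prime.

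The main obstacle is this last step. After a dyadic decomposition of the bisectors according to the multiplicity $m(\ell)$, one must balance the incidence bound against the two competing constraints $|L_j|\le |E|^2/2^j$ and $|L_j|\le q^2+q$ on the number of bisectors of a given multiplicity, and control the heavy (large-$m(\ell)$) bisectors, which correspond to reflections $\sigma$ with $|E\cap\sigma(E)|$ large; optimising this balance is what produces the exponent $5/4$. A secondary but genuine technical point is to quarantine the degenerate configurations: isotropic directions and distance-zero pairs (which occur when $-1$ is a square in $\mathbb F_q$), coincident points, and bisectors that repeat for structural rather than incidental reasons. One then checks that each such family contributes below $|E|^4/q^2$ as soon as $|E|\ge q^{5/4}$, which closes the argument.
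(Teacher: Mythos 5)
First, note that this statement is quoted by the paper from \cite{MPPRS}; the paper itself contains no proof of it (its own contribution, Theorem \ref{main}, is an elementary averaging/pigeonhole argument logically independent of this result), so your proposal can only be judged on its own terms. On those terms it has a genuine gap at its central step. The reductions you perform (Cauchy--Schwarz in $t$, averaging over pins, rewriting the off-diagonal isosceles count as $\sum_\ell m(\ell)|E\cap\ell|$, and Cauchy--Schwarz over lines) are all correct, but the statement everything is reduced to --- the unconditional bisector energy bound $\mathcal B(E)\ll |E|^4/q^2$ for $|E|\ge q^{5/4}$ --- is false. Take $E$ to be a union of $\lceil q^{1/4}\rceil$ parallel (say horizontal) lines, so $|E|\approx q^{5/4}$. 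Any two distinct points on a common horizontal line have a vertical perpendicular bisector, and for each of the $q$ vertical lines $\ell$ there are about $q^{1/4}\cdot q$ ordered pairs whose bisector is exactly $\ell$; hence $m(\ell)\gg q^{5/4}$ for $q$ lines and $\mathcal B(E)\gg q\cdot q^{5/2}=q^{7/2}$, while $|E|^4/q^2=q^3$. (The same construction with $q^{1/3}$ lines already defeats the bound at the $4/3$ threshold.) So no incidence theorem, over prime fields or otherwise, can deliver the inequality you ask for: your Cauchy--Schwarz step is itself too lossy for structured sets, even though the theorem's conclusion holds trivially for this $E$ (any pin sees $\gg q$ distances along a single full line).

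The deeper problem is that your plan for degenerate configurations points in the wrong direction: you propose to ``check that each such family contributes below $|E|^4/q^2$,'' but in the example above the structured family genuinely contributes $q^{7/2}\gg |E|^4/q^2$ and cannot be made small. The way this is handled in the literature (Lund--Sheffer--de Zeeuw over $\mathbb R$, Hanson--Lund--Roche-Newton over $\mathbb F_q$) is different: one proves a bisector energy bound carrying the parameter $M(E)$, the maximum number of points of $E$ on a line or circle, through terms such as $M(E)|E|^2$ (note $M(E)|E|^2=q^{7/2}$ in the example, matching the truth), and then one runs a separate geometric argument when $M(E)$ is large, since a rich line or circle forces many pinned distances directly; the two regimes do not meet trivially, and interpolating between them is where the real work lies. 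Carrying that out, and showing that prime-field incidence technology (Rudnev's point--plane bound, Stevens--de Zeeuw) lowers the threshold from $4/3$ to $5/4$, is precisely the hard content of \cite{HLR16} and \cite{MPPRS}, and it is exactly the part your proposal leaves as ``should then yield.'' As written, the argument reduces a true theorem to a false lemma, so the gap is structural rather than technical.
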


Despite researchers' efforts, the conjectured exponent $d/2$ has not been proven. It is unlikely that one can establish the conjecture by using the known techniques.
 Moreover, there is very little evidence to support that the conjecture is true.\\

The main purpose of this paper is not to derive an improved  result on the distance problem, but to address that the probability that  random sets satisfy the distance conjecture is very high.

\subsection{The statement of main results}

Our main theorem is as follows.
\begin{theorem} \label{main} Let $E\subseteq \mathbb F_q^d.$ Then given $a>1,$ there exists $Y\subseteq \mathbb F_q^d$ with $|Y|\ge \frac{a-1}{a} q^d$ such that for all $y\in Y$,  
$$ |\Delta_y(E)|\ge \min\left\{\frac{q}{2a},~\frac{|E|}{2a}\right\}.$$
\end{theorem}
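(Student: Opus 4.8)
The plan is to run a second-moment (Cauchy--Schwarz) argument on the distance multiplicities, and then use a Markov-type averaging to locate the large set $Y$ of good pins. For a pin $y \in \mathbb F_q^d$ and a value $t \in \mathbb F_q$, write $f(y,t) = |\{x \in E : ||x-y|| = t\}|$ for the number of points of $E$ on the sphere of radius $t$ about $y$. Since every point of $E$ contributes exactly one distance to $y$, we have $\sum_{t} f(y,t) = |E|$, while $|\Delta_y(E)|$ is the number of $t$ with $f(y,t) \ne 0$. Applying Cauchy--Schwarz to $\sum_{t : f(y,t) \neq 0} f(y,t)$ gives
$$ |E|^2 = \Big(\sum_t f(y,t)\Big)^2 \le |\Delta_y(E)| \sum_t f(y,t)^2, $$
so that $|\Delta_y(E)| \ge |E|^2 / S(y)$ where $S(y) := \sum_t f(y,t)^2$. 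It therefore suffices to show that $S(y)$ is small for a positive proportion of $y$.

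The next step is to estimate $S(y)$ on average. I would rewrite $S(y) = |\{(x,x') \in E \times E : ||x-y|| = ||x'-y||\}|$ and sum over all $y$, which counts triples $(x,x',y)$ with $||x-y|| = ||x'-y||$. Expanding $||x-y|| - ||x'-y|| = ||x|| - ||x'|| - 2 y \cdot (x - x')$, the equation becomes linear in $y$. For the diagonal pairs $x = x'$ every $y$ works, contributing $|E| q^d$; for each off-diagonal pair $x \neq x'$ the coefficient vector $2(x-x')$ is nonzero, so the solution set in $y$ is a hyperplane of size $q^{d-1}$, contributing $(|E|^2 - |E|) q^{d-1}$ in all. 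Hence
$$ \sum_{y \in \mathbb F_q^d} S(y) = |E| q^d + (|E|^2 - |E|) q^{d-1} \le q^d\Big(|E| + \frac{|E|^2}{q}\Big), $$
so the average of $S(y)$ is at most $M := |E| + |E|^2/q$.

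I would then invoke the pigeonhole/Markov principle: the set $Y := \{ y : S(y) < aM \}$ satisfies $|Y| \ge \frac{a-1}{a} q^d$, because at most $q^d/a$ values of $y$ can have $S(y) \ge aM$. For every $y \in Y$ the Cauchy--Schwarz bound yields
$$ |\Delta_y(E)| \ge \frac{|E|^2}{S(y)} > \frac{|E|^2}{aM} = \frac{|E| q}{a(q + |E|)}. $$
A short case analysis finishes the argument: if $|E| \le q$ then $q + |E| \le 2q$ and the right-hand side is at least $|E|/(2a)$, while if $|E| \ge q$ then $q + |E| \le 2|E|$ and it is at least $q/(2a)$; either way $|\Delta_y(E)| \ge \min\{q/(2a),\, |E|/(2a)\}$.

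The argument is elementary, so I expect no serious obstacle; the one place demanding care is the off-diagonal count in the averaging step, where I must verify that the bisector locus $\{y : ||x-y|| = ||x'-y||\}$ is genuinely a hyperplane. This is exactly where the hypothesis that $q$ is odd enters, since it guarantees $2(x - x') \neq 0$ whenever $x \neq x'$; in even characteristic the linear form could degenerate and the count would break down.
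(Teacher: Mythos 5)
Your proposal is correct and follows essentially the same route as the paper: the Cauchy--Schwarz bound $|\Delta_y(E)|\ge |E|^2/\sum_t \nu_y^2(t)$, the exact count of the second moment averaged over all pins via the diagonal/off-diagonal split and the bisector-hyperplane count $q^{d-1}$ (which the paper verifies by Fourier orthogonality, you by the explicit linear equation $2y\cdot(x-x')=\|x\|-\|x'\|$ --- the same fact, both requiring odd characteristic), followed by pigeonholing to extract $Y$. Your use of the plain Markov inequality is in fact a cleaner presentation of the paper's pigeonhole step, which reaches the identical conclusion by a contradiction argument, and your final case analysis on $|E|\lessgtr q$ matches the paper's concluding chain of inequalities.
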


The following result is a direct consequence of Theorem \ref{main}.
\begin{corollary}
Suppose that $E\subseteq \mathbb F_q^d, d\ge 2,$ with $|E|\ge q.$ Then for any $a>1$, there exists $Y\subseteq \mathbb F_q^d$ with $|Y|\ge \frac{a-1}{a} q^d$ so that for all $y\in Y,$ we have
$$ |\Delta(E\cup \{y\})|\ge \frac{q}{2a}.$$ 
\end{corollary}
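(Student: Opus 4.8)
The plan is to derive the corollary directly from Theorem \ref{main}, reducing the pinned-distance bound stated there to a bound on the full distance set of the augmented configuration $E \cup \{y\}$. I would begin by invoking Theorem \ref{main} with the prescribed value $a > 1$, which produces a set $Y \subseteq \mathbb F_q^d$ with $|Y| \ge \frac{a-1}{a} q^d$ such that $|\Delta_y(E)| \ge \min\{q/(2a),\, |E|/(2a)\}$ for every $y \in Y$. This $Y$ will serve, unchanged, as the set required by the corollary, so the cardinality lower bound on $Y$ is inherited for free.

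The next step is to use the standing hypothesis $|E| \ge q$ to dispose of the minimum. Since $|E| \ge q$ forces $|E|/(2a) \ge q/(2a)$, the minimum equals $q/(2a)$, and therefore $|\Delta_y(E)| \ge q/(2a)$ for all $y \in Y$. This is the point where the extra assumption $|E| \ge q$ (absent from Theorem \ref{main}) is consumed.

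It then remains only to pass from the pinned distance set $\Delta_y(E)$ to the full distance set $\Delta(E \cup \{y\})$. The key — and essentially the only nonroutine — observation is the inclusion $\Delta_y(E) \subseteq \Delta(E \cup \{y\})$: for each $x \in E$, both $x$ and $y$ lie in $E \cup \{y\}$, so the value $\|x - y\|$ contributing to $\Delta_y(E)$ is realized as a distance between a pair of points of $E \cup \{y\}$, hence belongs to $\Delta(E \cup \{y\})$. Monotonicity of cardinality under set inclusion then yields $|\Delta(E \cup \{y\})| \ge |\Delta_y(E)| \ge q/(2a)$ for every $y \in Y$, which is exactly the assertion of the corollary.

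Since each step is a one-line deduction, I do not anticipate any genuine obstacle; the only point warranting care is the verification of the set inclusion, and I would note that it holds irrespective of whether $y \in E$, since the argument never uses $y \notin E$.
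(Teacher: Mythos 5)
Your proposal is correct and follows exactly the paper's own argument: invoke Theorem \ref{main}, use $|E| \ge q$ to resolve the minimum to $q/(2a)$, and conclude via the inclusion $\Delta_y(E) \subseteq \Delta(E \cup \{y\})$. The only difference is that you spell out the inclusion in more detail than the paper, which states it without proof.
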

\begin{proof} Since $|E|\ge q,$  we have
$$ \min\left\{ \frac{q}{2a}, ~ \frac{|E|}{2a}\right\}=\frac{q}{2a}.$$
In addtition,  note that for all $y\in \mathbb F_q,$ we have $ |\Delta(E\cup \{y\})|\ge |\Delta_y(E)|.$
Hence, the statement of the corollary follows immediately from Theorem \ref{main}.
\end{proof}
\bibliographystyle{amsplain}

\begin{thebibliography}{10}

%\bibitem{CC} C. Chen, \emph{Finite field analogue of restriction theorem for general measures},  preprint (2018), arXiv:1801.00109.

%\bibitem{De} P. Deligne, \emph{Applications de la formule des traces aux sommes trigonom\'{e}triquescc, Cohomologie Etale(S\'eminaire de G\'eom\'etrie Alg\'ebrique du Bois-Marie SGA $4\frac{1}{2}$),} Lecture Notes in Math., vol. 569, PP. 168-232, Springer-Verlag, Berlin-Heidelberg-New York, 1977.

%\bibitem{G} B. Green, \emph{Restriction and Kakeya phenomena}, lecture note, http://people.maths.ox.ac.uk/greenbj/papers/rkp.pdf.
\bibitem{BHIPR}  M. Bennett, D. Hart, A. Iosevich, J. Pakianathan, and M. Rudnev, {\it Group actions and geometric combinatorics in $\mathbb F_q^d,$} Forum Math. \textbf{29} (2017),  91-110.

\bibitem{CEHIK09} J.~ Chapman, M.~ Erdo\~{g}an, D.~ Hart, A.~ Iosevich, and D.~ Koh, {\it Pinned distance sets, Wolff's exponent in finite fields and sum-product estimates}, Math.Z. \textbf{271}, (2012), 63-93

 
\bibitem{HIKR10} D.~ Hart, A.~ Iosevich, D.~ Koh and M.~ Rudnev, {\it Averages over hyperplanes, sum-product theory in vector spaces over finite fields and the Erd\"os-Falconer distance conjecture}, Trans. Amer. Math. Soc. Volume 363, Number 6,  (2011),  3255–3275.

\bibitem{HLR16} B. Hanson, B. Lund, and O. Roche-Newton, {\it On distinct perpendicular bisectors and pinned distances in finite fields,}  Finite Fields Appl. \textbf{37} (2016),  240-264.

\bibitem{IR07} A.~ Iosevich, M. ~Rudnev, {\it  Erd\H{o}s distance problem in vector spaces over finite fields}, Trans. Amer. Math. Soc. \textbf{359} (2007), 6127-6142.
%\bibitem{IK09} A. Iosevich, D. Koh, \emph{Extension theorems for paraboloids in the finite field setting}, Math. Z. {\bf 266} (2010),  471--487.

%\bibitem{IK10} A. Iosevich and D. Koh, \emph{Extension theorems for spheres in the finite field setting}, Forum. Math. {\bf 22} (2010), no.3, 457-483.

%\bibitem{IKL17} A. Iosevich, D. Koh, and M. Lewko, \emph{Finite field restriction estimates for the paraboloid in high even dimensions,} preprint (2017), arXiv:1712.05549.

%\bibitem{IKLST} A. Iosevich, D. Koh, L. Sujin, C. Shen, and T. Pham, \emph{On restriction estimates for spheres in finite fields,} preprint (2018),  arXiv:1806.11387.


%\bibitem{Ko16} D. Koh, \emph{Conjecture and improved extension theorems for paraboloids in the finite field setting}, preprint (2016), arXiv:1603.06512.

%\bibitem{KPV} D. Koh, T. Pham, and L. A, Vinh, \emph{Extension theorems and Distance problems over finite fields}, preprint (2018),  arXiv:1809.08699.


%\bibitem{KS12} D. Koh and C. Shen, \emph{Sharp extension theorems and Falconer distance problems for algebraic curves in two dimensional vector spaces over finite fields}, Rev. Mat. Iberoam.  \textbf{28}  (2012),  no.1, 157-178.
 

%\bibitem{KS} D. Koh and C. Shen, \emph{Extension and averaging operators for finite fields}, Proceedings of the Edinburgh Mathematical Society \textbf{56}, (2013), no. 2, 599-614. 

%\bibitem{Le13}  M.~Lewko, \emph{New restriction estimates for the 3-d paraboloid over finite fields},   Adv. Math. {\bf 270} (2015), no.1, 457-479.

%\bibitem{Le19}  M.~Lewko, \emph{Counting rectangles and an improved restriction estimate for the paraboloid in}, preprint (2018), arXiv:1901.10085. 
 

%\bibitem{LL10} A.~ Lewko and M.~Lewko, \emph{Endpoint restriction estimates for the paraboloid over finite fields}, Proc. Amer. Math. Soc. 140 (2012), 2013-2028.


%\bibitem{LN97} R. Lidl and H. Niederreiter, \emph{ Finite fields,} Cambridge University Press, (1997).

\bibitem{MPPRS} B. Murphy, G. Petridis, T. Pham, M. Rudnev, and S. Stevenson, {\it On the pinned distances problem in positive characteristic,}  to appear in Journal of London Mathematical Society, 2021, arXiv:2003.00510.

\bibitem{MT04} G. Mockenhaupt, and T. Tao, \emph{Restriction and Kakeya phenomena for finite fields}, Duke Math. J. \textbf{121} (2004), 1, 35-74.

\bibitem{RS18} M. Rudnev and I. Shkredov, \emph{On the restriction problem for discrete paraboloid in lower dimension}, Adv. Math. {\bf 339} (2018), 657-671.
 


\end{thebibliography}

\section{Proof of main result (Theorem \ref{main})}
We begin with the standard counting argument as in \cite{CEHIK09}.

 To find a lower bound of the cardinality of the $y$-pinned distance set $\Delta_y(E)$,  we consider the $y$-pinned counting function $\nu_y: \mathbb F_q \to \mathbb N \cup \{0\},$ which maps  an element $t$ in $\mathbb F_q$ to the number of elements $x$ in $E$ such that $||x-y||=t.$ In other words,  for $y\in \mathbb F_q^d, t\in \mathbb F_q,$ we have
$$\nu_y(t)=\sum_{x\in E: ||x-y||=t} 1.$$

Since $|E|^2=\left(\sum_{t\in \Delta_y(E)} \nu_y(t)  \right)^2, $  it follows from the Cauchy-Schwarz inequality that 
\begin{equation}\label{PinForm} |\Delta_y(E)|\ge \frac{|E|^2} {\sum_{t\in \mathbb F_q} \nu_y^2(t)}.\end{equation}

\subsection{Key lemmas}
The average of $\sum_{t\in \mathbb F_q} \nu_y^2(t)$ over $y$ in $\mathbb F_q^d$  is explicitly given as follows:
\begin{lemma}\label{AVPin} Let $E\subseteq \mathbb F_q^d.$ Then we have
$$ \frac{1}{q^d} \sum_{y\in \mathbb F_q^d} \sum_{t\in \mathbb F_q} \nu_y^2(t) = \frac{|E|^2}{q} + \frac{q-1}{q} |E|.$$
\end{lemma}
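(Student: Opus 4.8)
The plan is to expand the square, interchange the order of summation, and reduce the whole quantity to a point-counting problem on $\mathbb F_q^d$. First I would unfold the definition of $\nu_y$ to rewrite the inner sum as a count of \emph{pairs}. Since
$$ \sum_{t\in \mathbb F_q} \nu_y^2(t) = \sum_{t\in \mathbb F_q} \left( \sum_{x\in E:\, ||x-y||=t} 1\right)^2 = \#\{(x,x')\in E\times E : ||x-y||=||x'-y||\},$$
summing over $y$ and swapping the order of summation gives
$$ \sum_{y\in \mathbb F_q^d}\sum_{t\in \mathbb F_q} \nu_y^2(t) = \sum_{(x,x')\in E\times E} \#\{y\in \mathbb F_q^d : ||x-y||=||x'-y||\}.$$
So everything comes down to counting, for each fixed ordered pair $(x,x')$, the number of $y$ equidistant from $x$ and $x'$.

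Next I would split this count according to whether $x=x'$ or $x\neq x'$. On the diagonal the equidistance condition is vacuous, so all $q^d$ choices of $y$ work, contributing $|E|\cdot q^d$. For the off-diagonal pairs I would expand $||x-y||$ and $||x'-y||$ using the definition $||\alpha||=\sum_{j=1}^d \alpha_j^2$; the quadratic terms $\sum_j y_j^2$ cancel, leaving the \emph{linear} equation
$$ 2\sum_{j=1}^d (x'_j-x_j)\, y_j = ||x'|| - ||x||$$
in the unknown $y$. This is the main point to get right: because $q$ is odd we have $2\neq 0$ in $\mathbb F_q$, and since $x\neq x'$ the coefficient vector $2(x'-x)$ is nonzero, so the solution set is an affine hyperplane and therefore contains exactly $q^{d-1}$ points. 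This geometric identification of the bisector as a hyperplane is where the oddness of $q$ is genuinely used, and it is the only step requiring care.

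Finally I would assemble the two contributions. There are $|E|$ diagonal pairs and $|E|^2-|E|$ off-diagonal pairs, so
$$ \sum_{y\in \mathbb F_q^d}\sum_{t\in \mathbb F_q} \nu_y^2(t) = |E|\, q^d + (|E|^2-|E|)\, q^{d-1}.$$
Dividing by $q^d$ and simplifying yields $|E| + \frac{|E|^2-|E|}{q}$, which I would then rewrite as $\frac{|E|^2}{q}+\frac{q-1}{q}|E|$ to match the claimed identity. Apart from the hyperplane count, the argument is entirely bookkeeping, so I expect no serious obstacle beyond ensuring the cancellation of the $y_j^2$ terms and tracking the diagonal/off-diagonal split correctly.
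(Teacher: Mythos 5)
Your proof is correct, and its overall skeleton---expanding the square into a count of pairs $(x,x')$ equidistant from $y$, swapping the order of summation, and splitting into the diagonal contribution ($q^d$ choices of $y$ per pair) and the off-diagonal contribution ($q^{d-1}$ choices per pair)---is exactly the paper's. The one place you genuinely diverge is in justifying the count $\#\{y\in\mathbb F_q^d : ||x-y||=||x'-y||\}=q^{d-1}$ for $x\neq x'$, i.e.\ the identity \eqref{HPEq}. The paper states the bisecting-hyperplane interpretation only as intuition and then proves the count ``rigorously'' with finite field Fourier analysis: it writes the indicator of the condition $||x-y||=||x'-y||$ as $q^{-1}\sum_{s\in\mathbb F_q}\chi\bigl(s(||x-y||-||x'-y||)\bigr)$ for a nontrivial additive character $\chi$, extracts the $s=0$ term $q^{d-1}$, and kills the $s\neq 0$ terms by orthogonality in $y$, since the phase is linear in $y$ with nonzero coefficient $-2s(x-x')$. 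You instead make the hyperplane picture itself rigorous: expanding the norms cancels the $\sum_j y_j^2$ terms and leaves the linear equation $2(x'-x)\cdot y = ||x'||-||x||$, whose solution set is a fiber of a surjective linear functional on $\mathbb F_q^d$ and hence has exactly $q^{d-1}$ points. The two arguments hinge on the same fact ($2(x'-x)\neq 0$, which requires both $x\neq x'$ and odd $q$), but yours is more elementary and self-contained, and it makes the role of odd characteristic explicit; the paper's character-sum formulation is the standard machinery in this literature, and it is the version that generalizes when the equidistance condition is replaced by a variety that is not a hyperplane (e.g.\ counting points on spheres), where no linear-algebra shortcut is available.
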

\begin{proof}
By the definition of the $y$-pinned counting function $\nu_y(t)$,  we have for each $y\in \mathbb F_q,$ 
$$ \sum_{t\in \mathbb F_q} \nu_y^2(t) = \sum_{x,z\in E: ||x-y||=||z-y||} 1.$$
Hence,  the average of it over $y\in \mathbb F_q^d$  is given as follows:
\begin{align}\label{Sib} \frac{1}{q^d} \sum_{y\in \mathbb F_q^d} \sum_{t\in \mathbb F_q} \nu_y^2(t)&=\frac{1}{q^d} \sum_{x, z\in E: x=z} \sum_{y\in \mathbb F_q^d: ||x-y||=||z-y||} 1+\frac{1}{q^d} \sum_{x, z\in E: x\ne z} \sum_{y\in \mathbb F_q^d: ||x-y||=||z-y||} 1\\ \nonumber
&=|E| +\frac{1}{q^d} \sum_{x, z\in E: x\ne z} \sum_{y\in \mathbb F_q^d: ||x-y||=||z-y||} 1.\end{align}
\end{proof}
Now, we notice that for $x,z\in E$ with $x\ne z$, we have 
\begin{equation}\label{HPEq}\sum_{y\in \mathbb F_q^d: ||x-y||=||z-y||} 1 = q^{d-1}.\end{equation}
In fact, since $x\ne z$,  the quantity $\sum_{y\in \mathbb F_q^d: ||x-y||=||z-y||} 1$  is the number of the elements in the hyper-plane which bisects the line segment joining $x$ and $z$. Alternatively we can prove this rigorously  by using the finite field Fourier analysis. To see this,  let $\chi$ denote a nontrivial additive character of $\mathbb F_q.$ Then by the orthogonality of $\chi$,  we see that if  $x\ne z,$ then 
\begin{align*} \sum_{y\in \mathbb F_q^d: ||x-y||=||z-y||} 1
&= q^{-1} \sum_{y\in \mathbb F_q^d} \sum_{s\in \mathbb F_q}  \chi(s (||x-y||-||z-y||))\\
&=q^{d-1} + q^{-1} \sum_{y\in \mathbb F_q^d} \sum_{s\ne 0}  \chi(s (||x-y||-||z-y||)).\end{align*}
Applying the orthogonality of $\chi$ to the sum over $y$, we see that the second term above is zero since $\chi(s (||x-y||-||z-y||))= \chi(-2s (x-z)\cdot y)  \chi(s(||x||-||z||))$  and $s(x-z)$ is not a zero vector. Hence, the equation \eqref{HPEq} holds.

Finally, combining  the above two estimates \eqref{Sib}, \eqref{HPEq},  we obtain the desirable  estimate.\\

The following result can be obtained by the pigeonhole principle together with Lemma \ref{AVPin}.
\begin{lemma}\label{CorPinForm} Let $E\subseteq \mathbb F_q^d.$ Then for any $a>1,$ there exists $Y\subseteq \mathbb F_q^d$ with $|Y|\ge \frac{a-1}{a} q^d$  such that for every $y\in Y$,  
$$ \sum_{t\in \mathbb F_q} \nu_y^2(t) \le \frac{a}{q} |E|^2 + \frac{a(q-1)}{q} |E|.$$
\end{lemma}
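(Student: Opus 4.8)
The plan is to apply the averaging/pigeonhole argument in its quantitative Markov form to the nonnegative function $f(y) := \sum_{t\in \mathbb F_q} \nu_y^2(t)$, whose \emph{exact} average over $y\in \mathbb F_q^d$ has just been computed in Lemma \ref{AVPin}. Write
$$M := \frac{|E|^2}{q} + \frac{q-1}{q}|E|$$
for that average, so that Lemma \ref{AVPin} reads $\sum_{y\in \mathbb F_q^d} f(y) = q^d M$. (If $E=\emptyset$ the statement is vacuous with $Y=\mathbb F_q^d$, so I may assume $E\ne\emptyset$ and hence $M>0$.)

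First I would isolate the set of ``bad'' pins for which the desired bound fails, namely
$$B := \left\{ y\in \mathbb F_q^d : f(y) > aM \right\}.$$
Since each $\nu_y^2(t)$ is nonnegative, $f$ is nonnegative, and therefore
$$q^d M = \sum_{y\in \mathbb F_q^d} f(y) \ge \sum_{y\in B} f(y) > |B|\, aM,$$
where the final strict inequality uses the defining threshold of $B$. Dividing through by $aM>0$ yields $|B| < q^d/a$.

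It then remains to take $Y := \mathbb F_q^d \setminus B$. By construction $f(y)\le aM$ for every $y\in Y$, which upon substituting the value of $M$ is exactly the claimed estimate
$$\sum_{t\in \mathbb F_q} \nu_y^2(t) \le \frac{a}{q}|E|^2 + \frac{a(q-1)}{q}|E|.$$
Moreover $|Y| = q^d - |B| > q^d - q^d/a = \frac{a-1}{a}\,q^d$, giving the required lower bound on $|Y|$.

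I do not expect any substantive obstacle: the argument is essentially mechanical once Lemma \ref{AVPin} is in hand. The only points demanding a little care are the positivity $M>0$ (so that dividing the count by $aM$ preserves the inequality) and the use of a \emph{strict} threshold $f(y)>aM$ in the definition of $B$, which is what converts the averaging identity into the strict bound $|B|<q^d/a$ and hence into the non-strict conclusion $|Y|\ge \frac{a-1}{a}q^d$. All the genuine content of the lemma is carried by the exact average supplied by Lemma \ref{AVPin}.
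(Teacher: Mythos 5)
Your proof is correct and rests on the same foundation as the paper's: the exact average from Lemma \ref{AVPin} plus the pigeonhole principle. The execution, however, is genuinely leaner. The paper defines $Y$ as the set of good pins and argues by contradiction: assuming $|Y| < \frac{a-1}{a}q^d$, it splits the average over $Y$ and $\mathbb F_q^d\setminus Y$, bounds the sum over $Y$ from below by $|Y|\,|E|$ using the auxiliary inequality $\sum_{t}\nu_y^2(t) \ge \sum_{t}\nu_y(t) = |E|$, and after some algebra arrives at a value strictly exceeding the true average. Your version is the direct Markov-inequality form: you bound the contribution of the good set below by $0$ (nonnegativity of $f$) instead of by $|Y|\,|E|$, which makes the auxiliary inequality and the ensuing algebraic simplification unnecessary and replaces the contradiction with the one-line estimate $|B| < q^d/a$. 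Since the final conclusion is identical, the paper's extra input buys nothing here, and your simplification is real. Two pedantic points to tidy: the chain $q^d M \ge \sum_{y\in B} f(y) > |B|\,aM$ tacitly presumes $B \neq \emptyset$ (if $B = \emptyset$, the bound $|B| < q^d/a$ holds trivially, so nothing is lost); and your observation that one needs $E \neq \emptyset$ so that $M > 0$ is well taken --- the paper glosses over this, even though its own final contradiction likewise requires $|E| > 0$.
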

\begin{proof}
Let us fix $a>1.$ Define 
$$Y=\left\{y\in \mathbb F_q^d: \sum_{t\in \mathbb F_q} \nu_y^2(t) \le \frac{a}{q} |E|^2 + \frac{a(q-1)}{q} |E|\right\}.$$
To complete the proof,  it remains to show that 
$$|Y|\ge \frac{a-1}{a} q^d.$$
By contradiction,  let us assume that 
\begin{equation}\label{False} |Y| < \frac{a-1}{a} q^d.\end{equation}
It is clear that 
\begin{equation}\label{Pige1} \mathbb F_q^d \setminus Y=\left\{y\in \mathbb F_q^d:  \sum_{t\in \mathbb F_q} \nu_y^2(t) > \frac{a}{q} |E|^2 + \frac{a(q-1)}{q} |E|\right\}.\end{equation}
We also notice that for all $y\in \mathbb F_q^d,$
\begin{equation} \label{Pige2} \sum_{t\in \mathbb F_q} \nu_y^2(t) \ge \sum_{t\in \mathbb F_q} \nu_y(t) = |E|.\end{equation}

Now by Lemma \ref{AVPin},  it follows that
\begin{equation}\label{LemA}\frac{1}{q^d} \sum_{y\in \mathbb F_q^d} \sum_{t\in \mathbb F_q} \nu_y^2(t) = \frac{|E|^2}{q} + \frac{q-1}{q} |E|.\end{equation}
However, we can also estimate it as follows.  Using \eqref{Pige1} and \eqref{Pige2},  we have
\begin{align*}\frac{1}{q^d} \sum_{y\in \mathbb F_q^d} \sum_{t\in \mathbb F_q} \nu_y^2(t) 
&= \frac{1}{q^d} \sum_{y\in  Y} \sum_{t\in \mathbb F_q} \nu_y^2(t) + \frac{1}{q^d} \sum_{y\in \mathbb F_q^d\setminus Y} \sum_{t\in \mathbb F_q} \nu_y^2(t)\\
& > \frac{1}{q^d} |Y| |E| + \frac{1}{q^d}(q^d-|Y|) \left(\frac{a}{q} |E|^2 + \frac{a(q-1)}{q} |E|\right)\\
& = \frac{a|E|^2}{q} + \frac{a(q-1)|E|}{q} + \left(\frac{|E|}{q^d}- \frac{a|E|^2}{q^{d+1}} -\frac{a(q-1)|E|}{q^{d+1}}\right) |Y|. \end{align*}
Since $a>1$,  in the third term above,  the coefficient of $|Y|$ is negative. Hence, we can combine the above estimate with  \eqref{False} to deduce that
$$ \frac{1}{q^d} \sum_{y\in \mathbb F_q^d} \sum_{t\in \mathbb F_q} \nu_y^2(t)
 > \frac{a|E|^2}{q} + \frac{a(q-1)|E|}{q} + \left(\frac{|E|}{q^d}- \frac{a|E|^2}{q^{d+1}} -\frac{a(q-1)|E|}{q^{d+1}}\right) \left(\frac{a-1}{a} q^d \right)$$
Simplifying the RHS of the above equation, we get
$$ \frac{1}{q^d} \sum_{y\in \mathbb F_q^d} \sum_{t\in \mathbb F_q} \nu_y^2(t) >\frac{|E|^2}{q} + \frac{q-1}{q} |E| + \frac{a-1}{a} |E|,$$
which contradicts the equation \eqref{LemA} since $a>1.$
\end{proof}

\subsection{Proof of Theorem \ref{main}}
Combining \eqref{PinForm} and Lemma \ref{CorPinForm}, we get the required result:
$$ |\Delta_y(E)|\ge \frac{|E|^2}{\frac{a}{q} |E|^2 + \frac{a(q-1)}{q} |E|} \ge \min\left\{ \frac{q}{2a}, ~ \frac{q|E|}{2a(q-1)} \right\}\ge \min\left\{ \frac{q}{2a}, ~ \frac{|E|}{2a}\right\}. $$

\end{document}